\documentclass[12pt]{amsart}
\usepackage{setspace}
\usepackage{amsmath, amssymb,amsthm, amsfonts, latexsym}
\usepackage{color}

\newtheorem{thm}{Theorem}

\newtheorem{prop}{Proposition}

\newtheorem{df}{Definition}

\theoremstyle{remark}
\newtheorem{remark}{Remark}[section]

\newcommand{\Ric}{\mbox{Ric}}
\newcommand{\R}{\mathbb R}

\numberwithin{equation}{section}
\newcommand{\be}{\begin{equation}}
\newcommand{\ee}{\end{equation}}
\def\p{\partial}

\def\la{\langle}
\def\ra{\rangle}
\def\lf{\left}
\def\ri{\right}
\def\Pi{\displaystyle{\mathbb{II}}}
\def\Ric{\text{\rm Ric}}

\def\vh{\vspace{.2cm}}

\def\mS{\mathbb{S}}

\def\bee{\begin{equation*}}
\def\eee{\end{equation*}}

\def\dels{\Delta_{_\Sigma}}

\def\nabs{\nabla_{_\Sigma} }

\def\H{\mathbb{H}}
\def\mS{\mathbb{S}}

\def\Hnk{\H^n (\kappa) }

\def\Spk{\mS^n_+ ( \kappa ) }

\begin{document}

\title{A functional inequality on the boundary of static manifolds}

\author{Kwok-Kun Kwong${}^\#$}
\address[Kwok-Kun Kwong]{Department of Mathematics,  National Cheng Kung University, Tainan City 70101, Taiwan}
\email{kwong@mail.ncku.edu.tw}
\thanks{${}^\#$Research partially supported by Ministry of Science and Technology in Taiwan under grant MOST103-2115-M-006-016-MY3.}

\author{Pengzi Miao$^*$}
\address[Pengzi Miao]{Department of Mathematics, University of Miami, Coral Gables, FL 33146, USA.}
\email{pengzim@math.miami.edu}

\thanks{$^*$Research partially supported by Simons Foundation Collaboration Grant for Mathematicians \#281105.}

\date{}

\begin{abstract}
On the boundary of a compact Riemannian manifold $(\Omega, g)$ whose metric $g$ is static, we establish a functional inequality
involving the static potential of $(\Omega, g)$, the second fundamental form and the mean curvature of the boundary $\p \Omega$
 respectively.
\end{abstract}

\maketitle

\markboth{Kwok-Kun Kwong and Pengzi Miao}
{A functional inequality on the boundary of   static manifolds}

\section{introduction and statement of results}

The research in this paper is largely motivated by the following result
concerning a functional inequality on the boundary of bounded domains  in the Euclidean space $\R^n$,
proved in  \cite[Corollary 3.1]{MiaoTamXie11}.

\begin{thm}[\cite{MiaoTamXie11}]  \label{thm-E}
Let $ \Omega \subset \R^n  $ be a  bounded domain  with smooth boundary $\Sigma$.
Let $H$ and $\Pi$ be the mean curvature and the second fundamental form of $\Sigma  $
 with respect to the outward normal respectively.
If $H>0$, then
\be \label{eq-2nd-var}
\int_\Sigma \lf[ \frac{ ( \dels \eta )^2 }{ H }   - \Pi  ( \nabs \eta ,\nabs \eta )\ri] d \sigma \ge 0
\ee
for any smooth function $  \eta $ on $\Sigma$. Here  $ \nabs $,  $ \dels $ denote  the
gradient, the Laplacian on $ \Sigma$ respectively, and $ d \sigma $ is the volume form on $ \Sigma $.
Moreover, equality in \eqref{eq-2nd-var} holds for some $ \eta $ if and only if
 $ \eta = a_0 + \sum_{i=1}^n a_i x_i $ for some constants $a_0, a_1, \ldots, a_n $. Here $\{ x_1, \ldots, x_n\}$
 are the standard coordinate functions  on $\R^n$.
\end{thm}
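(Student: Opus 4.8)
The plan is to prove \eqref{eq-2nd-var} by applying the Reilly formula to the harmonic extension of $\eta$ and then completing a square using the hypothesis $H>0$. First I would solve the Dirichlet problem $\Delta f = 0$ in $\Omega$ with $f = \eta$ on $\Sigma$; standard elliptic theory provides a unique smooth solution $f$. Writing $f_\nu = \p f / \p \nu$ for the derivative along the outward unit normal $\nu$, and using that $\Omega \subset \R^n$ is flat so $\Ric \equiv 0$, the Reilly formula specializes, for this harmonic $f$, to
\[ -\int_\Omega |\Hess f|^2 \, dV = \int_\Sigma \lf[ H f_\nu^2 + 2 f_\nu \dels \eta + \Pi ( \nabs \eta, \nabs \eta ) \ri] d \sigma. \]

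Next, since $H > 0$ pointwise on $\Sigma$, I would complete the square in the boundary integrand via
\[ H f_\nu^2 + 2 f_\nu \dels \eta = H \lf( f_\nu + \frac{\dels \eta}{H} \ri)^2 - \frac{ ( \dels \eta )^2 }{ H }. \]
Substituting this into the Reilly identity and rearranging produces the exact formula
\[ \int_\Sigma \lf[ \frac{ ( \dels \eta )^2 }{ H } - \Pi ( \nabs \eta, \nabs \eta ) \ri] d \sigma = \int_\Omega |\Hess f|^2 \, dV + \int_\Sigma H \lf( f_\nu + \frac{\dels \eta}{H} \ri)^2 d \sigma, \]
whose right-hand side is manifestly nonnegative. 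This establishes the inequality \eqref{eq-2nd-var}.

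For the equality case, both terms on the right must vanish simultaneously: $\Hess f \equiv 0$ throughout $\Omega$, and $f_\nu = - \dels \eta / H$ on $\Sigma$. The condition $\Hess f \equiv 0$ forces $f$ to be an affine function $a_0 + \sum_{i=1}^n a_i x_i$, whence $\eta = f|_\Sigma$ has the asserted form; conversely any such $\eta$ is the boundary restriction of an affine (hence harmonic) function with vanishing Hessian, so its harmonic extension is that affine function and the right-hand side is zero. The one point requiring care is that the Neumann condition $f_\nu = - \dels \eta / H$ is not an independent requirement but is automatic once $\Hess f \equiv 0$: from the tangential–normal decomposition of the Laplacian near $\Sigma$,
\[ \Delta f = \dels u + \Hess f ( \nu, \nu ) + H f_\nu, \qquad u = f|_\Sigma, \]
a harmonic $f$ with $\Hess f \equiv 0$ satisfies $0 = \dels \eta + 0 + H f_\nu$, i.e.\ exactly $f_\nu = - \dels \eta / H$. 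I expect this reconciliation of the two vanishing conditions through the boundary decomposition of the Laplacian to be the only genuinely delicate step; the inequality itself is immediate from Reilly's formula once the sign conventions for $H$ and $\Pi$ (relative to the outward normal) are fixed.
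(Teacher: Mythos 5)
Your proof is correct and is essentially the paper's own approach: the paper establishes Theorem \ref{thm-E} as the special case $V \equiv 1$, $k = 0$ of Theorem \ref{thm-static}, whose proof likewise extends $\eta$ by solving the appropriate Dirichlet problem, applies the (weighted) Reilly formula, completes the square in $H$, and derives the rigidity statement from the vanishing of the Hessian term. Your observation that the Neumann condition $f_\nu = -\dels \eta / H$ is automatic once $\nabla^2 f \equiv 0$, via the boundary decomposition of the Laplacian, is exactly the remark the paper makes when showing that its boundary equality condition follows from the interior one.
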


When $n=3$ and $\Sigma$ is  convex,
 it is known (\cite{MiaoTamXie11})
 that the functional on the left side of \eqref{eq-2nd-var}
represents the second variation along $\eta$ of the Wang-Yau quasi-local energy (\cite{WangYau-PRL, WangYau08})
at the  $2$-surface $ \Sigma $,
lying in the time-symmetric slice $\R^3 = \{ t = 0 \}$,
in the Minkowski spacetime $ \R^{3,1}$.
Thus,  \eqref{eq-2nd-var} can be relativistically interpreted as
the stability inequality of the  Wang-Yau  energy at  $ \Sigma$.
The general case of  such a stability inequality 
is implied by results 
in \cite{CWY, WangYau08} for a   closed,  embedded,  spacelike $2$-surface in $ \R^{3,1}$ that  projects to a convex $2$-surface along some timelike direction.

In this paper, adopting a Riemannian geometry point of view,
 we generalize Theorem \ref{thm-E}
to hypersurfaces that are boundaries of bounded domains
in a simply connected space form.
More generally, we give    an analogue of \eqref{eq-2nd-var}
on the boundary of  compact Riemannian manifolds whose metrics
 are {\em static}  (see  Definition \ref{df-static}).

First, we fix some notations. Given a constant $\kappa > 0$,
let $\Hnk$ and  $\Spk$   denote an $n$-dimensional hyperbolic space  of constant sectional curvature $-\kappa$
and  an $n$-dimensional  open  hemisphere  of constant sectional curvature $\kappa $ respectively.

\begin{thm}\label{thm-EHS}
 Suppose $(M , g)$ is
 one of  $\mathbb{R}^n$,  $\Hnk$ and  $\mathbb{S}^n_+(\kappa)$.
Let $V $ be  the  positive  function on $M$  given by
\be \label{eq-V-model}
V=
  \begin{cases}
    1, \quad &  \mathrm{if} \ (M, g) =\mathbb{R}^n,\\
    \cosh \sqrt\kappa r, \quad &   \mathrm{if} \ (M, g) =\mathbb{H}^n(\kappa), \\
   \cos \sqrt\kappa r, \quad &  \mathrm{if} \ (M, g) =\mathbb{S}^n_+(\kappa),
  \end{cases}
\ee
  where $r$ is   the distance function  from a fixed point $p$ on $(M, g)$.
  When $ (M, g) = \Spk$, $p$ is chosen to be  the center of $\Spk $ so that $V>0$ on $M$.
  Given a  bounded domain $\Omega \subset M $ with smooth boundary $\Sigma$,
  let $H$ and $\Pi$ be the mean curvature and the second fundamental form of $\Sigma$ respectively.
If $H > 0 $, then  for any smooth  function $\eta$ on $\Sigma$,
 \be \label{eq-EHS}
\begin{split}
 & \  \int_\Sigma V \lf[  \frac{ \lf[ \dels \eta + (n-1) k  \eta  \ri]^2 }{H}  -  \Pi (\nabs \eta, \nabs \eta)    \ri] d \sigma \\
 \ge & \   \int_\Sigma \frac{\p V}{\p \nu} \lf[ | \nabs \eta |^2 - (n-1) k \eta^2 \ri]  d \sigma .
\end{split}
\ee
  Here $k = 0 $ or $\pm \kappa$ is    the sectional curvature of $(M,g)$.
Moreover,
equality in \eqref{eq-EHS} holds if and only if $\eta$ is the restriction of a function
\be \label{eq-equal-model}
u =
  \begin{cases}
   a_0 +  \sum_{i=1}^n a_i x_i,  & \mathrm{if} \ (M, g) =\mathbb{R}^n,\\
    a_0 t + \sum_{i=1}^n a_i x_i,  & \mathrm{if} \ (M, g)   =\mathbb{H}^n(\kappa), \\
  a_0 x_0 + \sum_{i=1}^n a_i x_i,   &  \mathrm{if} \ (M, g) =\mathbb{S}^n_+(\kappa) .
  \end{cases}
\ee
Here $a_0, \ldots, a_n $ are arbitrary constants,  $\Hnk$ is identified with
$$ \lf\{ (t, x_1, \ldots, x_n) \in \R^{n,1} \ | \ - t^2 + \sum_{i=1}^n x_i^2 = - \frac{1}{\kappa},  \ t >  0 \ri\} $$
in the $(n+1)$-dimensional Minkowski space $\R^{n,1}$
and
$\Spk$ is identified with
$$ \lf\{ (x_0, x_1, \ldots, x_n) \in \R^{n+1} \ | \sum_{i=0}^n x_i^2 = \frac{1}{\kappa}  , \  x_0   >  0 \ri\}  $$
in the $(n+1)$-dimensional Euclidean space $\R^{n+1}$.
\end{thm}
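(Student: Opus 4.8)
The plan is to reduce the boundary inequality to an interior integral identity and then discard manifestly nonnegative terms. First I would extend $\eta$ into $\Omega$ by solving the Dirichlet problem
\[
\Delta f + n k f = 0 \ \text{ in } \Omega, \qquad f = \eta \ \text{ on } \Sigma ,
\]
which is uniquely solvable in each model: for $\R^n$ and $\Hnk$ the operator $\Delta + nk$ has no kernel (it is $\Delta$ or a strictly negative operator), while for $\Spk$ the number $n\kappa$ is exactly the first Dirichlet eigenvalue of the full hemisphere, with ground state $V$, so by strict domain monotonicity it lies below the spectrum of any proper subdomain $\Omega$. Throughout I would use the defining relations of the static potential, $\Hess V = - k V g$ and $\Delta V = - n k V$, together with their consequence on $\Sigma$,
\[
\dels V = -(n-1) k V - H \, \tfrac{\p V}{\p \nu},
\]
which is the source of the factor $(n-1)k$ in the statement.

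Next I would feed $f$ into the static (i.e.\ $V$-weighted) Reilly formula. Because $V$ is a static potential the Ricci contribution collapses, leaving
\[
\int_\Omega V\big[(\Delta f)^2 - |\Hess f|^2\big] + \int_\Omega (\Delta V)\,|\nabla f|^2 = B,
\]
where $B$ is an integral over $\Sigma$ quadratic in $\psi := \p_\nu f$ and $\eta$, with coefficients built from $V$, $\p_\nu V$, $H$ and $\Pi$. Using $\Delta f = -nkf$ I would reorganize the left side as $-\int_\Omega V\,|\Hess f + k f g|^2$ plus lower order bulk terms. The tensor $\Hess f + kfg$ is exactly the Obata defect: it is divergence free under the equation for $f$, and it vanishes identically precisely when $f$ is a restriction of an ambient linear function, i.e.\ one of the functions in \eqref{eq-equal-model}. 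This term, weighted by $V>0$, is the nonnegative quantity I ultimately want to drop.

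The obstacle is that this reorganization leaves a residual interior term proportional to $\int_\Omega V f^2$, which for $n \ge 3$ and $k \ne 0$ cannot be absorbed by boundary manipulations. To remove it I would invoke the closed conformal (concircular) vector field $\nabla V$, conformal because $\Hess V = -kVg$, and the associated Pohozaev/Rellich identity for the eigenfunction $f$. Testing $\Delta f + nkf = 0$ against $\la \nabla V, \nabla f\ra$ produces a second interior identity relating $\int_\Omega V f^2$ and $\int_\Omega V|\nabla f|^2$ to boundary integrals involving $\p_\nu V\,|\nabs \eta|^2$, $\p_\nu V\,\eta^2$ and $\p_\nu V\,\psi^2$. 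Solving this together with the elementary identity $\int_\Omega V|\nabla f|^2 = \tfrac{nk}{2}\int_\Omega Vf^2 + (\text{boundary})$ expresses $\int_\Omega V f^2$ purely in terms of data on $\Sigma$; I expect these contributions to be exactly what assembles the right-hand side $\int_\Sigma \p_\nu V \,[\,|\nabs\eta|^2 - (n-1)k\eta^2\,]$ of \eqref{eq-EHS}.

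At this stage every term lives on $\Sigma$. Completing the square in the free Neumann datum $\psi$, I would use $\dels V = -(n-1)kV - H\p_\nu V$, after one integration by parts of the $\div_\Sigma(V\nabs\eta)$ cross term on $\Sigma$, to recognize the combination $\dels\eta + (n-1)k\eta$ and to check that the residual $\p_\nu V\,\psi^2$ contributions cancel, so that the coefficient of $\psi^2$ is exactly $VH$ and the square produces $-V(\dels\eta + (n-1)k\eta)^2/H$. Since $VH>0$, dropping the completed square $\int_\Sigma VH(\psi + \cdots)^2 \ge 0$ and the Obata defect $\int_\Omega V|\Hess f + kfg|^2 \ge 0$ yields \eqref{eq-EHS}. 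Equality forces both to vanish: the defect gives $\Hess f + kfg = 0$, so $f$ is one of the functions in \eqref{eq-equal-model}, and the vanishing square then fixes $\psi$; conversely such $f$ give equality. I expect the main difficulty to be the boundary bookkeeping in this last step, namely tracking the $\p_\nu V\,\psi^2$ and $\la \nabs V, \nabs\eta\ra$ terms across the Reilly and Pohozaev identities so that the denominator is exactly $H$ and the right-hand side appears in the stated form.
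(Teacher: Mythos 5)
Your proposal is, in skeleton, the paper's own proof: extend $\eta$ by solving $\Delta u + nku = 0$ with $u = \eta$ on $\Sigma$, feed the extension into the $V$-weighted Reilly formula of Qiu--Xia (Proposition \ref{prop-QX}), reorganize the bulk into the Obata defect $|\nabla^2 u + kug|^2$, complete the square in $\psi = \frac{\p u}{\p \nu}$, and characterize equality by the vanishing of the defect together with the fact that solutions of $\nabla^2 u + kug = 0$ on the models are spanned by the ambient coordinate functions in \eqref{eq-equal-model}. (The paper routes this through the general static Theorem \ref{thm-static} plus Remark \ref{rmk-Einstein}, and handles solvability in the hemisphere case by citing Reilly's Theorem 4 rather than your eigenvalue domain-monotonicity argument, but both of those are cosmetic differences; your solvability argument is correct.)

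The one substantive discrepancy is your third paragraph: the Pohozaev/Rellich identity with the concircular field $\nabla V$ is not needed, because the residual interior term you are trying to remove vanishes identically. In the Qiu--Xia formula \eqref{eq-QX} with $K = k$, the only $f^2$ bulk term is $(n-1)k \int_\Omega (\Delta V + nkV) f^2\, dv$, and the static potential of an Einstein model satisfies $\Delta V = -nkV$, so its coefficient is zero. Equivalently, in your by-hand version: passing from $(\Delta f)^2 - |\nabla^2 f|^2$ to the shifted quantities produces $n(n-1)k^2 \int_\Omega V f^2$, while the static equation turns the Ricci term into $2(n-1)k\int_\Omega V |\nabla f|^2$, and inserting your own ``elementary identity'' $\int_\Omega V|\nabla f|^2 = \frac{nk}{2}\int_\Omega Vf^2 + (\text{boundary})$ (valid precisely because $\Delta V = -nkV$) cancels the $\int_\Omega Vf^2$ terms exactly --- this is the bookkeeping carried out in the displayed identity at the end of the paper's proof of Proposition \ref{prop-QX}. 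Your final boundary step is likewise simpler than you anticipate: the boundary integrand coming out of \eqref{eq-QX} is $V\bigl[\,2\psi \dels \eta + H\psi^2 + \Pi(\nabs\eta,\nabs\eta) + 2(n-1)k\,\psi\,\eta\,\bigr]$, which completes to $V\bigl[\sqrt{H}\,\psi + (\dels\eta + (n-1)k\eta)/\sqrt{H}\,\bigr]^2$ \emph{pointwise}; no integration by parts on $\Sigma$ and no use of the relation $\dels V = -(n-1)kV - H\frac{\p V}{\p \nu}$ is required. So if you run your program the Pohozaev identity will simply enter with coefficient zero; it is a harmless but superfluous detour stemming from a miscount of the bulk terms, not a gap in the argument.
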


The standard metrics  on $\R^n$, $ \Hnk$, $\Spk$ are all examples of static metrics
which admit a {\em positive}  {static potential}.  We recall  the following definition from \cite{Corvino}:

\begin{df} [\cite{Corvino}] \label{df-static}
A Riemannian metric $g$ on a manifold $M$ is called {static} if the linearized scalar
curvature map  at $g$ has a nontrivial cokernel,
i.e. if there exists a nontrivial function $f$ on $M$ such that
\be \label{eq-static-c}
- (\Delta f) g  + \nabla^2 f - f \Ric  = 0 .
\ee
Here $\nabla^2$, $\Delta $ and $\Ric$ denote   the Hessian, the Laplacian and the Ricci curvature of $g$ respectively.
\end{df}

On a connected $(M,g) $ of dimension $n$, the space of functions $f$ satisfying \eqref{eq-static-c}
has dimension at most $n+1$ (cf. \cite[Corollary 2.4]{Corvino}).
When $g$ is static on $M$,
a nontrivial solution $f$ to \eqref{eq-static-c}
is  called a {\em static potential} of $(M,g)$.

It is known that  a static metric  necessarily has constant scalar curvature (cf. \cite[Proposition 2.3]{Corvino}).
Indeed, direct calculation shows that  $(M ,g )$ is static with a positive static potential $f$ if and only if the Lorentz warped product
$ \bar{g} = - f^2 dt^2 + g $ satisfies $ \Ric (\bar{g} ) = \frac{R}{n-1} \bar{g} $
where $R$ is the scalar curvature of $g$ (cf. \cite[Proposition 2.7]{Corvino}).
This interpretation  explains why static metrics have been  widely studied in the field of  mathematical  relativity
(see  e.g. \cite{Bunting-Masood, Anderson, Corvino, Chrusciel98, Beig-Schoen,  C-G10, M-M13}).

Our next theorem generalizes Theorem \ref{thm-E} to the boundary of a compact  Riemannian manifold  whose metric is static.

\begin{thm}  \label{thm-static}
Suppose $g$ is  a static metric on an $n$-dimensional compact manifold $\Omega$  with boundary $\Sigma$
and $V$ is a positive static potential on $(\Omega, g)$.
Let $ H$, $ \Pi$ be the mean curvature, the second fundamental form of $\Sigma  $ in $(\Omega, g)$ respectively.
If $H> 0$, then
 \be \label{eq-ineq-static}
\begin{split}
 & \  \int_\Sigma V \lf[  \frac{ \lf[ \dels \eta + (n-1) k  \eta  \ri]^2 }{H}  -  \Pi (\nabs \eta, \nabs \eta)    \ri] d \sigma \\
 \ge & \   \int_\Sigma \frac{\p V}{\p \nu} \lf[ | \nabs \eta |^2 - (n-1) k \eta^2 \ri]  d \sigma
\end{split}
\ee
for any function $\eta$ on $\Sigma$.  Here  $ k \le 0 $ is a nonpositive constant satisfying
$ \Ric \ge (n-1) k g $.
Moreover, equality holds  only if
\begin{enumerate}
\item[(i)] $ k = 0 $ and  $\eta $ is the boundary value of
a function $u $  on $(\Omega, g)$ satisfying
$
\nabla^2 u   =  0 .
$
\end{enumerate}
or
\begin{enumerate}
\item[(ii)] $ k < 0 $, $ g $ is Einstein, i.e. $ \Ric = (n-1) k g$, and
$\eta $ is the boundary value of
a function $u $  on $(\Omega, g)$ satisfying
$
\nabla^2 u + k u g  =  0 .
$
\end{enumerate}
\end{thm}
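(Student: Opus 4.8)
The plan is to derive a weighted Reilly-type identity, apply it to a carefully chosen auxiliary function, and use the static equation \eqref{eq-static-c} to control the interior integral. For an arbitrary $u \in C^\infty(\Omega)$, writing $z = u|_\Sigma$ and $p = \frac{\p u}{\p\nu}$, I would first establish
\be
\int_\Omega V\left[(\Delta u)^2 - |\nabla^2 u|^2\right] dv = \int_\Omega \left[\nabla^2 V - (\Delta V) g + V\Ric\right](\nabla u, \nabla u)\, dv + \mathcal{B},
\ee
where
\be
\mathcal{B} = \int_\Sigma V\left[2p\,\dels z + H p^2 + \Pi(\nabs z, \nabs z)\right] d\sigma + \int_\Sigma \frac{\p V}{\p\nu} |\nabs z|^2\, d\sigma .
\ee
This follows by multiplying the Bochner identity $\frac12 \Delta |\nabla u|^2 = |\nabla^2 u|^2 + \langle \nabla u, \nabla \Delta u\rangle + \Ric(\nabla u, \nabla u)$ by $V$, integrating over $\Omega$, and integrating by parts twice; the boundary terms are reduced to surface quantities using $\nabla u = \nabs z + p\,\nu$, $\Delta u = \dels z + H p + \nabla^2 u(\nu, \nu)$, and $\nabla^2 u(\nu, X) = X(p) - \Pi(X, \nabs z)$ for tangential $X$ (one verifies the formula by specializing to $V \equiv 1$, recovering the classical Reilly formula). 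The static equation \eqref{eq-static-c} gives $\nabla^2 V - (\Delta V) g = V\Ric$, so the interior integrand becomes $2 V \Ric(\nabla u, \nabla u)$, while tracing \eqref{eq-static-c} yields $\Delta V = -\frac{R}{n-1} V$ with $R$ the (constant) scalar curvature.

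Given $\eta$, I would then take $u$ to be the unique solution of $\Delta u + n k\, u = 0$ in $\Omega$ with $u = \eta$ on $\Sigma$; this is solvable because $k \le 0$ makes $\Delta + n k$ invertible under Dirichlet conditions. Setting $T := \nabla^2 u + k u\, g$, the equation $\Delta u = -nku$ gives $|\nabla^2 u|^2 = |T|^2 + n k^2 u^2$, so the left-hand side of the identity equals $n(n-1) k^2 \int_\Omega V u^2\, dv - \int_\Omega V |T|^2\, dv$. For the interior term I would use $\Ric \ge (n-1) k g$ and $V > 0$ to bound $2\int_\Omega V \Ric(\nabla u, \nabla u) \ge 2(n-1) k \int_\Omega V |\nabla u|^2\, dv$, and then integrate $\int_\Omega V |\nabla u|^2\, dv$ by parts (using $\Delta u = -nku$ and $\Delta V = -\frac{R}{n-1} V$) to trade it for the interior term $k[R - n(n-1) k] \int_\Omega V u^2\, dv$ together with the boundary integrals $-2(n-1) k \int_\Sigma V \eta p\, d\sigma$ and $(n-1) k \int_\Sigma \frac{\p V}{\p\nu} \eta^2\, d\sigma$.

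Assembling these pieces, the $\frac{\p V}{\p\nu}\eta^2$ boundary terms cancel, and completing the square in $p$ (using $H > 0$) in the remaining boundary integral shows that \eqref{eq-ineq-static} is equivalent to
\be
k[R - n(n-1)k] \int_\Omega V u^2\, dv - \int_\Omega V |T|^2\, dv - \int_\Sigma V H \left( p + \frac{\dels \eta + (n-1) k \eta}{H} \right)^2 d\sigma \le 0 .
\ee
Each term is nonpositive: $k \le 0$ and $R \ge n(n-1)k$ (from $\Ric \ge (n-1) k g$) give the first, while $V > 0$ and $H > 0$ give the other two. For the rigidity statement, equality forces all three to vanish. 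The vanishing of $\int_\Omega V |T|^2$ gives $\nabla^2 u + k u\, g = 0$, which is case (i) when $k = 0$; when $k < 0$ and $\eta \not\equiv 0$, the vanishing of the first term forces $R = n(n-1) k$, so the nonnegative tensor $\Ric - (n-1) k g$ is trace-free and hence zero, i.e. $g$ is Einstein, which is case (ii).

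The main obstacle I anticipate is the interior Ricci term: after invoking \eqref{eq-static-c}, the weighted Bochner identity leaves $2\int_\Omega V\Ric(\nabla u, \nabla u)$ rather than zero, so the argument hinges on the non-obvious fact that, for the specific equation $\Delta u + n k u = 0$, the Ricci lower bound together with a single integration by parts repackages all residual interior contributions into one manifestly nonpositive multiple of $\int_\Omega V u^2$. The delicate bookkeeping is to confirm that the $\frac{\p V}{\p\nu}\eta^2$ surface terms cancel exactly and that the minimizing value $p = -(\dels \eta + (n-1) k \eta)/H$ of the completed square is automatically consistent with $\nabla^2 u + k u\, g = 0$, so that the three equality conditions are simultaneously attainable.
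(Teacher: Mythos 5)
Your proposal is correct and follows essentially the same route as the paper: you solve the same auxiliary Dirichlet problem $\Delta u + nk\,u = 0$ with $u|_\Sigma = \eta$, apply the $V$-weighted Reilly identity (your $K=0$ identity is exactly the paper's intermediate formula in its proof of the Qiu--Xia proposition, and your integration by parts of $\int_\Omega V|\nabla u|^2\,dv$ via $\Delta u = -nku$ and $\Delta V = -\tfrac{R}{n-1}V$ reproduces precisely the $K$-terms of the general Qiu--Xia formula the paper invokes with $K=k$), use the static equation to reduce the interior term to $2V\,\mathrm{Ric}(\nabla u,\nabla u)$, complete the square in $\partial u/\partial\nu$, and extract the equality conditions---including the trace argument that $R=n(n-1)k$ plus $\mathrm{Ric}\ge(n-1)kg$ forces $g$ Einstein---just as the paper does. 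The only difference is organizational bookkeeping (performing the shift $T=\nabla^2 u + kug$ by hand rather than quoting the general-$K$ weighted Reilly formula up front), which changes nothing of substance.
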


In Theorem \ref{thm-static}, the fact that $ k $ is taken as a nonpositive lower bound of the Ricci curvature of $g$
is restricted   by the method of our proof (cf.  Remark \ref{rmk-k}).
Thus, if $g$ has positive Ricci curvature, \eqref{eq-ineq-static} is always a strict inequality.
However, in this case,  if in addition that $g$ is Einstein, then $k$ can be chosen to be positive
and \eqref{eq-ineq-static} is sharp (cf. Remark \ref{rmk-Einstein}).

If the metric $g$ is not static, we also give an inequality similar to that in Theorem \ref{thm-static} but under more stringent assumptions
on the boundary and  the interior curvature  (see Theorem \ref{thm-sectional}).

\section{proof of Theorems 2 and 3}
Theorem \ref{thm-E} was derived in \cite{MiaoTamXie11} as an application of Reilly's formula  \cite{Reilly77}.
(A different generalization of Theorem \ref{thm-E} was given in \cite{MiaoWang14}, again
by making use of Reilly's formula.)
To prove Theorem \ref{thm-EHS} and \ref{thm-static},
we make use
of the following  weighted Reilly's formula, recently derived by  Qiu and Xia in \cite[Theorem 1.1]{Qiu-Xia}.

\begin{prop}[\cite{Qiu-Xia}] \label{prop-QX}
Let $(\Omega, g)$ be an $n$-dimensional, compact Riemannian manifold with  boundary $\Sigma$.
Given two functions $f$, $V$ on $\Omega$ and a constant $K$, one has
\be \label{eq-QX}
\begin{split}
&  \ \int_\Omega V \lf[ ( \Delta f + K n f )^2 -  \lf| \nabla^2 f + K f g \ri|^2 \ri] d v \\
= & \  \int_\Omega \lf[ \nabla^2 V -  (\Delta V) g  - 2 (n-1) K V g + V \Ric \ri](\nabla f , \nabla f ) d v \\
&\ + (n-1) K \int_\Omega  ( \Delta V + n K V ) f^2 d v
 + \int_\Sigma \frac{\p V}{\p \nu} \lf[ | \nabs f |^2 - (n-1) K f^2 \ri]  d \sigma \\
& \ + \int_\Sigma V \lf[ 2 \lf( \frac{\p f}{\p \nu} \ri) \dels f  + H   \lf( \frac{\p f}{\p \nu} \ri)^2
+ \Pi (\nabs f, \nabs f)  + 2 (n-1) K  \lf( \frac{\p f}{\p \nu} \ri)  f  \ri] d \sigma .
\end{split}
\ee
\end{prop}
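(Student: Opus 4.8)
The plan is to derive \eqref{eq-QX} from the pointwise Bochner identity, weighting by $V$ and integrating by parts, with the shift by the constant $K$ handled separately at the end. The starting point is the divergence identity
\be
\div \lf[ (\Delta f) \nabla f - \nabla^2 f (\nabla f) \ri] = (\Delta f)^2 - |\nabla^2 f|^2 - \Ric(\nabla f, \nabla f) ,
\ee
a consequence of the Ricci identity $\nabla^i \nabla_i \nabla_j f = \nabla_j (\Delta f) + R_{jk} \nabla^k f$. Writing $Z = (\Delta f)\nabla f - \nabla^2 f(\nabla f)$, I would multiply by $V$, use $V \div Z = \div (VZ) - \la \nabla V, Z \ra$, and apply the divergence theorem to obtain
\be
\int_\Omega V \lf[ (\Delta f)^2 - |\nabla^2 f|^2 \ri] dv = \int_\Omega V \Ric (\nabla f, \nabla f)\, dv + \int_\Sigma V \la Z, \nu \ra\, d\sigma - \int_\Omega \la \nabla V, Z \ra\, dv .
\ee

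Next I would process the interior term $\int_\Omega \la \nabla V, Z \ra$. Writing $\la \nabla V, Z \ra = (\Delta f) \la \nabla V, \nabla f \ra - \nabla^2 f(\nabla V, \nabla f)$, integrating the first summand by parts through $\la \nabla f, \nabla \la \nabla V, \nabla f \ra \ra = \nabla^2 V(\nabla f, \nabla f) + \nabla^2 f(\nabla V, \nabla f)$, and rewriting $\nabla^2 f(\nabla V, \nabla f) = \tfrac{1}{2} \la \nabla V, \nabla |\nabla f|^2 \ra$ before integrating it by parts as well, the interior contributions assemble into $\int_\Omega \lf[ \nabla^2 V - (\Delta V) g + V \Ric \ri] (\nabla f, \nabla f)$, which is the $K=0$ bulk term of \eqref{eq-QX}. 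These two integrations by parts also leave the boundary integrals $\int_\Sigma \tfrac{\p V}{\p \nu} |\nabla f|^2$ and $- \int_\Sigma \tfrac{\p f}{\p \nu} \la \nabla V, \nabla f \ra$, which must be combined with $\int_\Sigma V \la Z, \nu \ra$.

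The core of the argument is reducing these boundary integrals to the surface form in \eqref{eq-QX}. Along $\Sigma$ I would split $\nabla f = \nabs f + \tfrac{\p f}{\p \nu} \nu$, use the trace decomposition $\Delta f = \dels f + H \tfrac{\p f}{\p \nu} + \nabla^2 f(\nu, \nu)$, and the tangential--normal Hessian identity $\nabla^2 f(\nu, X) = X \lf( \tfrac{\p f}{\p \nu} \ri) - \Pi(X, \nabs f)$ valid for $X$ tangent to $\Sigma$ (with $\Pi(X,Y) = \la \nabla_X \nu, Y \ra$ and $H = \tr \Pi$). After these substitutions the terms in $\nabla^2 f(\nu,\nu)$ cancel, the quantities $H \lf( \tfrac{\p f}{\p \nu} \ri)^2$ and $\Pi(\nabs f, \nabs f)$ appear, and the surviving cross terms combine into $- \int_\Sigma \la \nabs \lf( V \tfrac{\p f}{\p \nu} \ri), \nabs f \ra$; integrating this by parts on the \emph{closed} manifold $\Sigma$ supplies a second copy of $\int_\Sigma V \lf( \tfrac{\p f}{\p \nu} \ri) \dels f$, producing the coefficient $2$ in \eqref{eq-QX}. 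I expect this step to be the main obstacle: it is where the several boundary terms must be tracked simultaneously and where the sign convention for $\Pi$ enters decisively, although each individual manipulation is elementary.

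Finally I would reinstate the constant $K$. Since
\be
(\Delta f + K n f)^2 - |\nabla^2 f + K f g|^2 = (\Delta f)^2 - |\nabla^2 f|^2 + 2(n-1) K f \Delta f + n(n-1) K^2 f^2 ,
\ee
it remains to add $\int_\Omega V \lf[ 2(n-1) K f \Delta f + n(n-1) K^2 f^2 \ri]$ to the $K=0$ identity. Integrating $\int_\Omega V f \Delta f$ by parts gives $- \int_\Omega f \la \nabla V, \nabla f \ra - \int_\Omega V |\nabla f|^2 + \int_\Sigma V f \tfrac{\p f}{\p \nu}$, and integrating the first term once more via $f \la \nabla V, \nabla f \ra = \tfrac{1}{2} \la \nabla V, \nabla f^2 \ra$ produces exactly the remaining pieces of \eqref{eq-QX}: the summand $-2(n-1) K V g$ in the bulk quadratic form, the interior integral $(n-1) K \int_\Omega (\Delta V + n K V) f^2$, and the boundary terms $-(n-1) K \tfrac{\p V}{\p \nu} f^2$ and $2(n-1) K V \lf( \tfrac{\p f}{\p \nu} \ri) f$. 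Collecting all contributions yields \eqref{eq-QX}.
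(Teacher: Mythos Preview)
Your proposal is correct and follows essentially the same route as the paper: derive the $K=0$ weighted Reilly identity via Bochner plus integration by parts, rewrite the boundary integrals using the tangential--normal decomposition of $\nabla^2 f$ and $\Delta f$ along $\Sigma$, and then add in the $K$-shift by expanding $(\Delta f+Knf)^2-|\nabla^2 f+Kfg|^2$ and integrating $\int_\Omega Vf\Delta f$ by parts twice. The only organizational difference is that you package the Bochner step as the divergence of $Z=(\Delta f)\nabla f-\nabla^2 f(\nabla f)$, whereas the paper starts from $\tfrac12\Delta(V|\nabla f|^2)$ and splits $\int_\Omega\la\nabla V,\nabla|\nabla f|^2\ra$ with a ``$3/2$'' trick; both lead to the same bulk term $\int_\Omega[\nabla^2 V-(\Delta V)g+V\Ric](\nabla f,\nabla f)$ and the same boundary contributions.
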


For readers' convenience, we include a proof of \eqref{eq-QX} below.

\begin{proof}
Direct calculation gives
\be \label{eq-qx-1}
\begin{split}
 \frac12 \Delta ( V | \nabla f |^2)
= & \   \frac12 ( \Delta V ) | \nabla f |^2 + \frac12  V  \Delta | \nabla  f|^2
+  \la \nabla V, \nabla | \nabla f |^2 \ra  .
\end{split}
\ee
The  integral of $  \la \nabla V, \nabla | \nabla f |^2 \ra $ can be written  as
\be \label{eq-qx-2}
\begin{split}
& \ \int_\Omega  \la \nabla V, \nabla | \nabla f |^2 \ra d v \\
= & \
\frac32 \int_\Omega  \la \nabla V, \nabla | \nabla f |^2 \ra
-  \int_\Omega \nabla^2 f ( \nabla V , \nabla f ) d v  \\
= & \  - \frac32 \int_\Omega ( \Delta V) | \nabla f |^2 d v
+ \frac32 \int_\Sigma \frac{ \p V}{\p \nu} | \nabla f |^2 d \sigma
 -\int_\Sigma \la \nabla V , \nabla f \ra \frac{\p f}{\p \nu}  d \sigma   \\
& \
 +  \int_\Omega \nabla^2 V ( \nabla f , \nabla f ) d v  + \int_\Omega  \la \nabla  V, \nabla  f \ra \Delta f d v .
\end{split}
\ee
It follows from  \eqref{eq-qx-1}, \eqref{eq-qx-2} and the Bochner formula that
\bee \label{eq-qx-3}
\begin{split}
& \ \frac12 \int_\Sigma \frac{\p }{\p \nu} \lf( V | \nabla f |^2 \ri) d \sigma
-  \int_\Omega V \lf[ | \nabla^2 f |^2 + \Ric (\nabla f , \nabla f ) + \la \nabla \Delta f , \nabla f \ra  \ri] d v
\\
= & \ - \int_\Omega ( \Delta V) | \nabla f |^2
+ \frac32 \int_\Sigma  \frac{\p V}{\p \nu}   | \nabla f |^2 -\int_\Sigma \la \nabla V , \nabla f \ra \frac{\p f}{\p \nu}  d \sigma   \\
& \
 +  \int_\Omega \nabla^2 V ( \nabla f , \nabla f ) d v  + \int_\Omega  \la \nabla  V, \nabla  f \ra \Delta f d v .
\end{split}
\eee
Using the fact
\be
\begin{split}
 \frac12 \frac{\p }{\p \nu} | \nabla f |^2
= & \ \la \nabs f , \nabs \lf( \frac{\p f}{\p \nu} \ri)  \ra - \Pi (\nabs f, \nabs f ) \\
& \ +  \frac{\p f}{\p \nu} \lf( \Delta f - \dels f - H \frac{\p f}{\p \nu} \ri)
\end{split}
\ee
and
\be
\begin{split}
 \int_\Omega V \la \nabla \Delta f , \nabla f \ra d v
& \ =  - \int_\Omega V ( \Delta f )^2 d v  - \int_\Omega  \la \nabla V, \nabla f \ra \Delta f d v \\
& \ +   \int_\Sigma V (\Delta f ) \frac{\p f}{\p \nu} d \sigma  ,
\end{split}
\ee
we   have
\be \label{eq-qx-5}
\begin{split}
& \  \int_\Sigma  V \lf[ - \Pi (\nabs f, \nabs f )
 +  \frac{\p f}{\p \nu} \lf(  - 2\dels f - H \frac{\p f}{\p \nu} \ri) \ri]  d \sigma
 -  \int_\Sigma  \frac{\p V}{\p \nu}   | \nabs f |^2    d \sigma   \\
= & \  \int_\Omega V \lf[ | \nabla^2 f |^2 - ( \Delta f )^2 \ri] + \lf[ V \Ric - ( \Delta V) g  +  \nabla^2 V \ri] ( \nabla f , \nabla f ) d v   ,
\end{split}
\ee
where we also  made the use of
\bee
\int_\Sigma  V  \la \nabs f , \nabs \lf( \frac{\p f}{\p \nu} \ri)  \ra
+ \la \nabs V, \nabs f \ra \frac{\p f}{\p \nu} d \sigma
= - \int_\Sigma V (\dels f ) \frac{\p f}{\p \nu} d \sigma
\eee
and
$
| \nabla f |^2 = \lf(  \frac{\p f}{\p \nu} \ri)^2 + | \nabs f |^2
$ along $\Sigma$.
Now \eqref{eq-QX} follows from \eqref{eq-qx-5} and  the fact
\bee \label{eq-qx-6}
\begin{split}
& \ \int_\Omega V   \lf[ | \nabla^2 f  |^2  -   ( \Delta f )^2 \ri]  dv \\
= & \
\int_\Omega V   \lf[  | \nabla^2 f + K f g  |^2 -  ( \Delta f + n K f )^2 \ri]   dv  +
 (n-1) K \int_\Omega  n  K V f^2    dv \\
 & \  +
 (n-1) K \lf[   \int_\Sigma  \lf( 2 V f \frac{\p f}{\p \nu }  -  f^2 \frac{\p V}{\p \nu}  \ri) d \sigma
+  \lf( \int_\Omega (\Delta V) f^2 - 2 V | \nabla f |^2  \ri) d v\ri] .
\end{split}
\eee
This completes the proof.
\end{proof}

\begin{remark}
Formula \eqref{eq-QX} reduces to Reilly's formula (\cite[equation(14)]{Reilly77}) when $V=1$ and $K=0$.
\end{remark}

Motivated by  equation \eqref{eq-static-c} in Definition \ref{df-static} of static metrics,
we  can rewrite formula
\eqref{eq-QX} as
\be \label{eq-QX-r}
\begin{split}
&  \ \int_\Omega V \lf[ ( \Delta f + K n f )^2 -  \lf| \nabla^2 f + K f g \ri|^2 \ri] d v \\
= & \  \int_\Omega \lf[ \nabla^2 V -  (\Delta V) g  - V \Ric \ri](\nabla f , \nabla f ) d v
  +  2  \int_\Omega V  \lf[  \Ric -  (n-1) K g  \ri](\nabla f , \nabla f ) d v \\
&\ + (n-1) K \int_\Omega  ( \Delta V + n K V ) f^2 d v
 + \int_\Sigma \frac{\p V}{\p \nu} \lf[ | \nabs f |^2 - (n-1) K f^2 \ri]  d \sigma \\
& \ + \int_\Sigma V \lf[ 2 \lf( \frac{\p f}{\p \nu} \ri) \dels f  + H   \lf( \frac{\p f}{\p \nu} \ri)^2
+ \Pi (\nabs f, \nabs f)  + 2 (n-1) K  \lf( \frac{\p f}{\p \nu} \ri)  f  \ri] d \sigma .
\end{split}
\ee
It is the second line in \eqref{eq-QX-r} that prompts one  to apply Proposition \ref{prop-QX}
to domains in a static manifold.

\begin{proof}[Proof of Theorem \ref{thm-static}]
As $ k \le 0 $, given any nontrivial $\eta $ on $\Sigma$, there exists a  unique solution $u$ to
\be \label{eq-u-extension}
\left\{
\begin{array}{rcl}
\Delta u + n k  u & = & 0 \ \ \ \mathrm{on} \ \Omega \\
u & = & \eta \ \ \ \mathrm{at} \ \Sigma  .
\end{array}
\right.
\ee
On the other hand, taking trace of  \eqref{eq-static-c} gives
\be \label{eq-static-trace}
 \Delta V +  \frac{ R }{n-1} V = 0 ,
\ee
where  $ R $ is the scalar curvature of $g$ (which is a constant).
Plug  this $V$, together with  $f = u $ and $K = k$  in \eqref{eq-QX}, using \eqref{eq-static-c},  \eqref{eq-QX-r}
and \eqref{eq-static-trace},
we have
\be \label{eq-QX-r-ap}
\begin{split}
&  \ -  \int_\Omega V  \lf| \nabla^2 u + k u g \ri|^2  d v \\
= & \
   2  \int_\Omega V  \lf[  \Ric -  (n-1) k g  \ri](\nabla u , \nabla u ) d v
+  k \lf[  n (n-1) k  - R  \ri]  \int_\Omega   V  u^2 d v \\
& \  + \int_\Sigma \frac{\p V}{\p \nu} \lf[ | \nabs \eta |^2 - (n-1) k \eta^2 \ri]  d \sigma \\
& \ + \int_\Sigma V \lf[ 2 \lf( \frac{\p u}{\p \nu} \ri) \dels \eta  + H   \lf( \frac{\p u}{\p \nu} \ri)^2
+ \Pi (\nabs \eta, \nabs \eta)  + 2 (n-1) k  \lf( \frac{\p u }{\p \nu} \ri)  \eta  \ri] d \sigma .
\end{split}
\ee
Since $V>0$, $ \Ric \ge (n-1) k g $,  $ R \ge n(n-1) k$ and $ k \le 0 $, \eqref{eq-QX-r-ap} implies
\be \label{eq-app-K-nonp}
\begin{split}
  & \ \int_\Sigma V \lf\{  \frac{ \lf[ \dels \eta + (n-1)k \eta  \ri]^2 }{H}   -  \Pi (\nabs \eta, \nabs \eta) \ri\} d \sigma \\
  \ge & \  \int_\Omega V  \lf| \nabla^2 u + k u g \ri|^2  d v +   \int_\Sigma \frac{\p V}{\p \nu} \lf[ | \nabs \eta |^2 - (n-1) K \eta^2 \ri]  d \sigma \\
& \ + \int_\Sigma V \lf[ \sqrt{H}   \lf( \frac{\p u}{\p \nu} \ri) +
 \frac{  \dels \eta + (n-1)k \eta  }{\sqrt{H}} \ri]^2   d \sigma .
\end{split}
\ee
It follows from \eqref{eq-app-K-nonp} that
\be \label{eq-app-K-nonp-1}
\begin{split}
  & \ \int_\Sigma V \lf\{  \frac{ \lf[ \dels \eta + (n-1)k \eta  \ri]^2 }{H}   -  \Pi (\nabs \eta, \nabs \eta) \ri\} d \sigma \\
  \ge & \   \int_\Sigma \frac{\p V}{\p \nu} \lf[ | \nabs \eta |^2 - (n-1) K \eta^2 \ri]  d \sigma .
\end{split}
\ee
Moreover, by \eqref{eq-QX-r-ap},  equality in \eqref{eq-app-K-nonp-1} holds only if
\begin{align}
k \lf[ n(n-1) k - R \ri] = & \ 0 , \label{eq-k-R}\\
\nabla^2 u + k u g  = &  \ 0   ,
\label{eq-interior} \\
 H  \lf( \frac{\p u}{\p \nu} \ri) +
  \dels \eta + (n-1)k \eta   =  &  \ 0
  \label{eq-bdry} .
\end{align}
Condition \eqref{eq-k-R} implies either $ k= 0 $ or $ R = n(n-1) k $.
In the later case, it follows from $ \Ric \ge (n-1) k g $ that $\Ric = (n-1)k g$, i.e.
$g$ is Einstein.
 We also note that \eqref{eq-bdry} in fact follows from \eqref{eq-interior}.
 This is because, if \eqref{eq-interior} holds,
then at $\Sigma$,
\be
 \Delta u =  \dels u + H \frac{\p u}{\p \nu} + \nabla^2 u (\nu, \nu)
= \dels u + H \frac{\p u}{\p \nu} - k u
\ee
which  implies \eqref{eq-bdry} since  $\Delta u =  - nku $.
This proves Theorem \ref{thm-static}.
\end{proof}

\begin{remark}\label{rmk-k}
In the above proof, the assumption $k \le 0$ is essentially used in only one place, i.e.
to ensure
\be \label{eq-sign-k-R}
 k [n (n-1) k - R ] \ge 0 .
 \ee
The other use of $k \le 0 $ in the construction of $u$ is not essential   because, by another theorem of
Reilly (\cite[Theorem 4]{Reilly77}), one can  still
solve \eqref{eq-u-extension} in the case of $k>0$, provided  $(\Omega ,g )$ is not isometric to $\mS^n_+(k)$.
\end{remark}

\begin{remark} \label{rmk-Einstein}
If $ \Ric = (n-1) k g$, then
$$ k [ n(n-1) k - R] = 0 $$
regardless of the sign of $k$. Therefore,  the above proof also shows that
 inequality \eqref{eq-ineq-static} still holds if  the assumption ``$ \Ric \ge (n-1) k g $ and  $k \le 0$"
 is replaced by that  $g$ is Einstein. In this case, equality holds if and only if $\eta$ is the boundary value
 of some function $u$ that satisfies
 $ \nabla^2 u + k u g = 0 $ on $(\Omega, g)$.
\end{remark}

Theorem \ref{thm-EHS} now follows  from  Theorem \ref{thm-static} and Remark \ref{rmk-Einstein}.

\begin{proof}[Proof of Theorem \ref{thm-EHS}]
Each positive function $V$ in \eqref{eq-V-model} is a solution to \eqref{eq-static-c} when $(M, g) = \R^n$,
$\Hnk$ or $\Spk$.  Hence,  inequality \eqref{eq-EHS} follows from  \eqref{eq-ineq-static} in Theorem \ref{thm-static} and Remark \ref{rmk-Einstein}.

Suppose the equality in \eqref{eq-EHS} holds from a nontrivial $\eta$. By  Theorem \ref{thm-static} and Remark \ref{rmk-Einstein},
$\eta$ is the boundary value of a function $u$ on $(\Omega, g)$ satisfying
\be
\nabla^2 u + k u g = 0 .
\ee
Since the standard metric $g$ on $\R^n$, $\Hnk$ and $\Spk$ is also Einstein,
the static equation \eqref{eq-static-c} is equivalent  to
\be \label{eq-static-reduced}
\nabla^2 f + k f g = 0 .
\ee
Therefore, $u$ is the restriction of a static potential of $(M,g )$ to  $(\Omega, g)$.
Theorem \ref{thm-EHS} now follows from the fact that the space of solutions to \eqref{eq-static-c}  on $(M, g)$ is spanned by
\begin{align*}
\{ 1, x_1, \ldots, x_n \}, & \ \ \mathrm{when} \ (M, g) = \R^n \\
\{ t , x_1, \ldots, x_n \}, &  \ \ \mathrm{when} \  (M, g) = \Hnk  \\
\{ x_0, x_1,  \ldots, x_n \}, & \  \ \mathrm{when} \  (M, g) = \Spk .
\end{align*}
\end{proof}

\begin{remark}\label{rem: local}
By \cite{cheeger1996lower} (p. 192-194) (cf. \cite{tashiro1965complete} Theorem 2 for a related result), it is known that if $(\Omega, g)$ possesses a function $u$ with $\nabla ^2 u=-k u g$, then $g$ is locally a warped product metric in the sense that there exists a Riemannian manifold $(N^{n-1}, g_N)$ such that $g$ can be locally expressed as $dr^2 + s(r)^2 g_N$ where $s(r)$ is a function on an interval $I$. In fact, their argument (which is local) shows that $u$ can be expressed as a function of $r$ and $u(r)$ satisfies the linear ODE $u''=-ku$, and that $s(r)=u'(r)$. Also, $s=u'$ and $g_N$ are unique up to multiplicative constants. Once these have been fixed, $u$ is determined by an additive constant. For example, when $k=0$, $g$ is locally a product metric $dr^2+ g_N$.
\end{remark}

\section{A similar inequality}
When the metric is not static,  there is an inequality similar to that in
Theorem \ref{thm-static}  but under more stringent conditions on the boundary and the interior curvature.

 For a compact Riemannian manifold $\Omega$ with boundary $\Sigma$, we say it is star-shaped with respect to an interior point $p\in \Omega$
 if every point in $\Omega$ can be joined by a minimal geodesic starting from $p$.

\begin{thm}\label{thm-sectional}
  Let $(\Omega, g)$ be an $n$-dimensional compact Riemannian manifold with  boundary $\Sigma$.
  Suppose $\Sigma$ has positive mean curvature and is star-shaped with respect to an interior point $p\in \Omega$.
   Let $ \kappa >  0 $ be a  constant such that $ - \kappa $ is a lower bound of the sectional curvature of $ g$.
  Let $r=d(p, \cdot)$ and $V=\cosh \sqrt \kappa r $. Here $ d(\cdot, \cdot)$ denotes the distance function on $(\Omega, g)$.
   Then for any function $\eta$ on $\Sigma$,
  \be
  \begin{split}
    & \ \int_\Sigma  V \left[   \frac{ \left[ \Delta_\Sigma \eta -(n-1)\kappa \eta \right]^2   }{H}
    -   \Pi(\nabla_\Sigma \eta, \nabla_\Sigma \eta)   \right]  d\sigma\\
   \ge &  \
     \int_\Sigma \frac{\partial V}{\partial \nu}  \left[ |\nabla_\Sigma \eta |^2 + (n-1) \kappa\eta ^2  \right] d\sigma.
\end{split}
  \ee

  Moreover,  the equality holds   only if $\Omega$ has constant curvature $-\kappa$.
 Here $H$, $ \Pi$ are the mean curvature and the second fundamental form of $\Sigma$ respectively.

\end{thm}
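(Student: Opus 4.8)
The plan is to feed the weighted Reilly formula of Proposition \ref{prop-QX} with the choices $K=-\kappa$, $V=\cosh\sqrt\kappa\, r$, and $f=u$, where $u$ solves the Dirichlet problem $\Delta u-n\kappa u=0$ on $\Omega$, $u=\eta$ on $\Sigma$. Since $-n\kappa<0$ is not a Dirichlet eigenvalue of $-\Delta$, this problem has a unique solution, and for nontrivial $\eta$ the solution $u$ is nontrivial. With these choices the left-hand side of \eqref{eq-QX} collapses to $-\int_\Omega V|\nabla^2 u-\kappa u g|^2\,dv\le 0$, so the entire content of the inequality is pushed into showing that the interior terms on the right-hand side of \eqref{eq-QX} are nonnegative, after which the boundary terms are handled exactly as in the proof of Theorem \ref{thm-static}.

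First I would control the interior terms. The curvature assumption gives two ingredients: the Ricci bound $\Ric\ge-(n-1)\kappa g$, and, via the Hessian comparison theorem applied along the minimal geodesics from $p$ (which exist by the star-shaped hypothesis), the estimate $\nabla^2 V\le \kappa V g$. Writing $B:=\kappa V g-\nabla^2 V\ge0$, a direct computation yields the identity $\nabla^2 V-(\Delta V)g+(n-1)\kappa V g=(\tr B)g-B$, and $(\tr B)g-B\ge0$ because $B\ge0$. Hence the coefficient tensor in \eqref{eq-QX} satisfies $\nabla^2 V-(\Delta V)g-2(n-1)K V g+V\Ric=(\tr B)g-B+V[\Ric+(n-1)\kappa g]\ge0$, so its pairing with $\nabla u$ integrates to a nonnegative quantity; the remaining interior term equals $(n-1)\kappa\int_\Omega(\tr B)u^2\,dv\ge0$. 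Thus the whole interior contribution is nonnegative.

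With the interior terms disposed of, I would complete the square in $\frac{\p u}{\p\nu}$ on $\Sigma$, using $H>0$, in the same manner as in the passage from \eqref{eq-QX-r-ap} to \eqref{eq-app-K-nonp}. The boundary terms of \eqref{eq-QX} then reorganize so that the target quantity $\int_\Sigma V[\frac{(\dels\eta-(n-1)\kappa\eta)^2}{H}-\Pi(\nabs\eta,\nabs\eta)]-\int_\Sigma\frac{\p V}{\p\nu}[|\nabs\eta|^2+(n-1)\kappa\eta^2]$ is expressed as a sum of three manifestly nonnegative contributions: the completed square $\int_\Sigma V H(\frac{\p u}{\p\nu}+\frac{\dels\eta-(n-1)\kappa\eta}{H})^2$, the interior term $\int_\Omega V|\nabla^2 u-\kappa u g|^2$, and the interior contribution bounded below in the previous step. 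This yields the stated inequality.

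The main obstacle is that $r=d(p,\cdot)$, hence $V$, is only Lipschitz across the cut locus of $p$, so neither \eqref{eq-QX} nor the Hessian comparison is literally available for a $C^2$ function. I would resolve this by working on $\Omega\setminus\mathrm{Cut}(p)$, where $r$ is smooth, and checking that the distributional singular part of $\nabla^2 V$ along $\mathrm{Cut}(p)$ is a nonpositive measure whose contribution to the interior integral has exactly the favorable sign; equivalently, one may approximate $r$ by smooth functions obeying the comparison up to an $\epsilon$. For the equality statement, the vanishing of $(n-1)\kappa\int_\Omega(\tr B)u^2\,dv$ forces $\tr B=0$, hence $B=0$, wherever $u\ne0$; since $u\not\equiv0$ this set is dense, so $\nabla^2 V=\kappa V g$ on all of $\Omega$. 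This is the equality case of the Hessian comparison, whose rigidity forces $\nabla^2 r=\sqrt\kappa\coth(\sqrt\kappa\, r)(g-dr\otimes dr)$ and hence the polar form $dr^2+\kappa^{-1}\sinh^2(\sqrt\kappa\, r)\,g_{\mathbb{S}^{n-1}}$, i.e.\ constant curvature $-\kappa$ (the cut locus is then empty, removing the earlier technicality), while the condition $\nabla^2 u-\kappa u g=0$ coming from the other vanishing term is consistent with this structure.
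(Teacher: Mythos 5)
Your argument is correct and, for the inequality itself, is essentially the paper's own proof: both feed the Qiu--Xia formula with $K=-\kappa$, $V=\cosh\sqrt\kappa\, r$ and the solution of $\Delta u - n\kappa u=0$, $u|_\Sigma=\eta$, establish $\nabla^2 V\le \kappa V g$ by Hessian comparison, and complete the square in $\p u/\p \nu$ exactly as in the proof of Theorem \ref{thm-static}. Your packaging via $B:=\kappa Vg-\nabla^2 V\ge 0$, with the identity $\nabla^2 V-(\Delta V)g+(n-1)\kappa Vg=(\tr B)g-B$ and the remaining interior term $(n-1)\kappa\int_\Omega (\tr B)u^2\,dv$, is precisely the paper's ``diagonalizing $\nabla^2 V$'' step (which yields $\Delta V g-\nabla^2 V\le (n-1)\kappa Vg$ and $\Delta V\le n\kappa V$) in equivalent form. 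You diverge from the paper in two respects, both to your credit. First, you flag that star-shapedness does not make $r$ smooth across the cut locus of $p$, so $V$ need not be $C^2$ and both the Hessian comparison and the integration by parts require a distributional (Calabi-type) justification, with the singular part of $\nabla^2 r$ contributing with the favorable sign; the paper applies the comparison and formula \eqref{eq-QX} without comment, so your patch genuinely tightens the argument. Second, your equality analysis takes a different route: the paper asserts, ``as in Theorem \ref{thm-static},'' that $R=-n(n-1)\kappa$ and then pinches against the sectional bound --- a terse step, since here the vanishing interior terms directly control only $\Delta V-n\kappa V$ on $\{u\neq 0\}$ and the tensor $(\tr B)g-B+V[\Ric+(n-1)\kappa g]$ in the directions of $\nabla u$; you instead extract $\tr B=0$ on $\{u\neq 0\}$, upgrade to $B\equiv 0$ by unique continuation (the nodal set of $u$ has empty interior) and continuity, and invoke the rigidity of the Hessian comparison to recover the polar form $dr^2+\kappa^{-1}\sinh^2(\sqrt\kappa\, r)\,g_{\mS^{n-1}}$, hence constant curvature $-\kappa$. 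Both routes reach the stated conclusion; yours is more self-contained and also explains why the cut locus is empty in the equality case, while the paper's scalar-curvature pinching is shorter where its intermediate claim is granted.
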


\begin{proof}
By Hessian comparison, we have
$$\nabla  ^2 r \le\sqrt \kappa \coth (\sqrt \kappa r) (g-dr^2).$$
This implies
$$\nabla  ^2 V= \sqrt \kappa \sinh (\sqrt \kappa r )\nabla  ^2 r + \kappa\cosh (\sqrt \kappa r  )dr^2\le \kappa\cosh (\sqrt \kappa r )g = \kappa Vg.$$
By diagonalizing $\nabla  ^2V$, we see that
$$\Delta Vg -\nabla  ^2 V\le (n-1)\kappa Vg $$
and $\Delta V\le n\kappa V$.
This implies that, for any function $u$ on $\Omega$,
\begin{align*}
     \int_\Omega \left(V  \mathrm{Ric}+2(n-1)\kappa  V   g +\nabla  ^2 V  -\Delta Vg\right)(\nabla  u ,\nabla  u)dv\ge 0.
\end{align*}
The proof then proceeds as in Theorem \ref{thm-static}.

If the equality case holds, then as in the argument of Theorem \ref{thm-static}, we have $R=-n(n-1)\kappa$, which implies $\Omega$ has constant curvature $-\kappa$ as we assume its curvature $\ge -\kappa$.

\end{proof}

\medskip

\noindent\emph{Acknowledgements.}
PM would like to thank Shanghai Center for Mathematical Sciences  for its gracious hospitality,
during which  part of the work on this paper was carried out.
Both authors  would like to thank the anonymous referee for the very useful  comments and suggestions.

\vh 

After this paper was submitted, we learned  that an inequality that is analogous 
to \eqref{eq-EHS} in Theorem \ref{thm-EHS} 
was  established by Chen, Wang and Yau (\cite{Chen_Sanya}) in the study of quasi-local energy for spacetimes  
with a cosmological  constant. 
We want to thank Professors  Po-Ning Chen and Mu-Tao Wang for helpful discussions 
concerning \eqref{eq-EHS}.

\end{document}